\documentclass[12pt]{amsart}

\usepackage{amsmath,amssymb,amsthm}
\usepackage{graphicx}
\usepackage{booktabs}
\usepackage{hyperref}
\usepackage{url}
\hypersetup{
  pdftitle={Operator-Norm Bounds and a Quadratic Lower-Growth Example for the Special Euclidean Algebra se(3)},
  pdfauthor={Sooraj K.C and Vivek Mishra},
  pdfkeywords={Lie algebras, gradient Lipschitz, SE(3), exponential map, Rodrigues formula}
}

\theoremstyle{plain}
\newtheorem{theorem}{Theorem}[section]
\newtheorem{lemma}[theorem]{Lemma}
\newtheorem{proposition}[theorem]{Proposition}
\newtheorem{corollary}[theorem]{Corollary}
\theoremstyle{definition}
\newtheorem{definition}[theorem]{Definition}
\newtheorem{remark}[theorem]{Remark}
\newtheorem{conjecture}[theorem]{Conjecture}

\newcommand{\se}{\mathfrak{se}}

\newcommand{\R}{\mathbb{R}}

\title[Operator-Norm Bounds for $\se(3)$]{Operator-Norm Bounds and a Quadratic Lower-Growth Example\\
for the Special Euclidean Algebra $\se(3)$}

\author{Sooraj K.C}
\address{Department of Pure and Applied Mathematics,
Alliance University, Bengaluru 562106, India}
\email{ksoorajPHD23@sam.alliance.edu.in}

\author{Vivek Mishra}
\address{Department of Pure and Applied Mathematics,
Alliance University, Bengaluru 562106, India}

\subjclass[2020]{22E60, 15A60, 15A16, 22E70, 90C26}

\keywords{Matrix Lie algebras; Gradient Lipschitz constant;
Special Euclidean algebra; Exponential parameterization;
Rodrigues formula; Operator norm bounds; Geometric optimization}

\newenvironment{ack}{\section*{Acknowledgements}}{}

\begin{document}
\sloppy
\maketitle

\begin{abstract}
We prove operator-norm and gradient Lipschitz bounds for exponential-map
parameterizations on the special Euclidean algebra $\mathfrak{se}(3)$,
providing an explicit example of intermediate polynomial growth behaviour.
Using the contraction property of the $\mathrm{SO}(3)$ left Jacobian, we show that
\[
\|\exp(\theta)\|_{\mathrm{op}} \le 1+\|\theta\|_F
\]
for all $\theta\in\mathfrak{se}(3)$.
We then derive a self-contained $\mathcal{O}(R^2)$ upper bound for the gradient
Lipschitz constant of objectives in this class, with explicit constant $4.02$,
and construct an explicit objective $J^*$ satisfying
\[
L_{J^*}(R;\mathfrak{se}(3)) \ge 0.0505\,R^2
\]
for $R\ge 2$.
These results place $\mathfrak{se}(3)$ between compact Lie algebras,
where $L$ remains bounded, and algebras with hyperbolic elements,
where $L$ grows exponentially.
The upper and lower bounds are obtained for different objective classes;
no matching minimax claim is made.
\end{abstract}

\section{Introduction}
\label{sec:intro}

A common approach to optimizing a smooth function $\widetilde{J}$ on a matrix Lie group
$G$ is to pull it back to the Lie algebra via the exponential map:
write $J(\theta) = \widetilde{J}(\exp(\theta))$ and optimize over $\mathfrak{g}$.
This approach has become standard in rigid-body trajectory
planning~\cite{murray1994mathematical,barfoot2014associating,sola2018micro} and in the Riemannian optimization
literature more broadly~\cite{absil2008optimization,boumal2023introduction,mahony2002geometry}.
One advantage is that the algebra is a vector space, so gradient methods
apply directly; the disadvantage is that the smoothness of $J$ as a function
of $\theta$ depends on how the exponential map distorts distances.
For compact groups such as $\mathrm{SO}(n)$ this distortion is uniformly bounded
and poses no difficulty for convergence analysis.
For non-compact groups the situation is less clear, and the conditioning of
exponential-coordinate parameterizations has received comparatively little
systematic attention in the optimization literature, despite the prevalence
of $\mathrm{SE}(3)$ in robotics and geometric control.
The price of the parameterization is that one must control
\[
L(R;\mathfrak{g}) = \sup_{\substack{\|\theta\|_F,\|\theta'\|_F \le R\\\theta\ne\theta'}}
\frac{\|\nabla J(\theta)-\nabla J(\theta')\|_F}{\|\theta-\theta'\|_F},
\]
since any gradient-descent analysis requires $L$ to limit the step size to at most $1/(2L)$.

The behaviour of $L(R;\mathfrak{g})$ turns out to depend
heavily on the algebraic structure of $\mathfrak{g}$ in a way that is not
immediately obvious from the definition.
When $\mathfrak{g}$ is compact, the group is bounded and $L$ stays bounded too.
When $\mathfrak{g}$ contains an element with a positive real eigenvalue,
$\exp(t H)$ grows exponentially. So does $L$.
For $\mathfrak{se}(3)$, neither of these applies: translations can be arbitrarily large,
but no element has a real positive eigenvalue, and the gradient Lipschitz constant
turns out to be $\mathcal{O}(R^2)$ for bounded objectives, with an explicit objective
exhibiting the same quadratic lower growth.
This intermediate behaviour fits within the broader trichotomy studied in~\cite{kc2026trichotomy}.
The present paper provides a self-contained quantitative study of this intermediate case:
an operator-norm bound $\|\exp(\theta)\|_{\mathrm{op}} \le 1+R$,
an $\mathcal{O}(R^2)$ Lipschitz upper bound for bounded objectives
(Proposition~\ref{prop:upper}), and an explicit adversarial construction
exhibiting $\Omega(R^2)$ lower growth (Theorem~\ref{thm:lower}).

We note an inherent structural distinction in this analysis.
The upper bound (Proposition~\ref{prop:upper}) is derived under the assumption
that $\widetilde{J}$ has uniformly bounded gradient and Hessian
(Definition~\ref{def:bounded_class}), whereas the adversarial lower bound
(Theorem~\ref{thm:lower}) uses the translation-distance objective $J^*$,
whose gradient grows with $R$ and therefore lies outside the bounded class.
Constructing a tight adversary within the admissible class
remains an open problem; the difficulty is that globally bounding the Hessian
on $\mathcal{B}_R$ rules out objectives whose curvature grows with the search radius.
The paper therefore does not claim minimax sharpness:
the upper bound is a universal estimate over the admissible class,
and the lower bound is existential via a specific constructed objective.

The reason $\mathfrak{se}(3)$ lands in the quadratic case is structural.
Its adjoint eigenvalues are $\{\pm i\|\omega\|, 0,0,0,0\}$ — purely imaginary or zero —
so no element is hyperbolic and $\exp(\theta)$ does not grow exponentially.
But the group is non-compact (translations are unbounded), so $L$ is not bounded either.
Compact algebras avoid norm growth because their exponential maps remain uniformly bounded;
hyperbolic elements generate unbounded stretching.
The algebra $\mathfrak{se}(3)$ occupies an intermediate regime for three reasons.
First, the rotational component $\mathfrak{so}(3)$ is compact:
$\exp(\Omega) \in \mathrm{SO}(3)$ is always orthogonal, so rotation alone
does not amplify norms and cannot generate growth in $L$.
Second, the translational coupling in the semidirect product
$\mathfrak{se}(3) = \mathfrak{so}(3) \ltimes \mathbb{R}^3$ introduces
exactly one factor of $R$ per differentiation via the Jacobian $J_L(\Omega)v$,
and squaring that coupling in the Lipschitz estimate yields $R^2$.
Third, the absence of hyperbolic eigenvalues (all eigenvalues of $\mathrm{ad}_{(\omega,v)}$
are purely imaginary or zero) prevents the exponential amplification
that arises in algebras such as $\mathfrak{gl}(n)$.
Translation directions are sheared by rotation but not exponentially amplified.
The practical implication is direct: step sizes for $\mathrm{SE}(3)$ optimization
should decay like $1/R^2$, substantially less aggressive than the exponential
decay required for $\mathrm{GL}(n)$ objectives.

Sections~\ref{sec:prelim}--\ref{sec:lower_bound} contain the setup and the two proofs;
Section~\ref{sec:algorithms} adds the step-size corollary and a brief numerical check;
Section~\ref{sec:conclusion} discusses what we do and do not know about the problem.
Figure~\ref{fig:regimes} summarises the three growth regimes.

\begin{figure}[ht]
\centering
\includegraphics[width=0.82\linewidth]{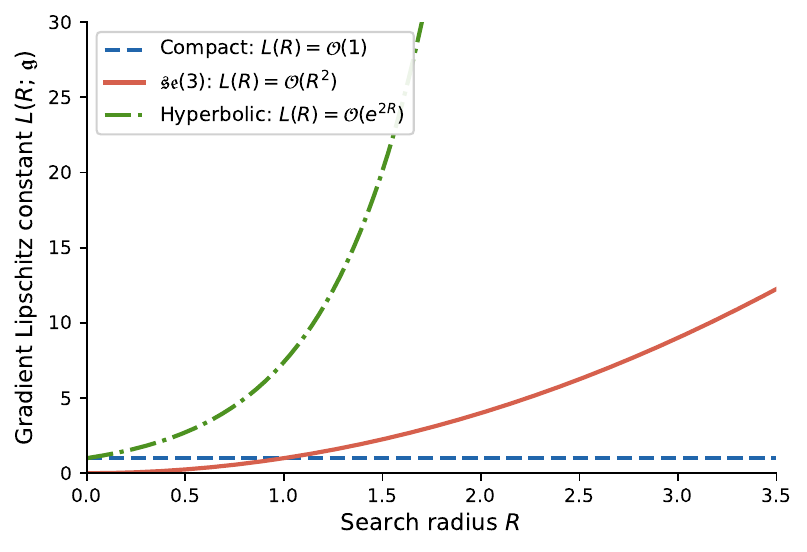}
\caption{The three growth regimes for the gradient Lipschitz constant
$L(R;\mathfrak{g})$ arising from exponential-map parameterizations on
matrix Lie algebras. Compact algebras (e.g.\ $\mathfrak{so}(n)$) give
bounded $L$; the special Euclidean algebra $\mathfrak{se}(3)$ gives
quadratic growth; algebras with hyperbolic elements
(e.g.\ $\mathfrak{gl}(n)$) give exponential blow-up.}
\label{fig:regimes}
\end{figure}

\section{Notation and Preliminaries}
\label{sec:prelim}

\subsection{Matrix Lie algebras and norms}

Throughout, Lie groups are closed matrix subgroups $G \subset \mathrm{GL}(n,\mathbb{R})$.
The Lie algebra $\mathfrak{g}$ is the set of all $X$ such that $\exp(tX) \in G$
for every $t$, with Lie bracket $[X,Y] = XY-YX$.
We write $\|A\|_F = (\mathrm{tr}\,A^\top A)^{1/2}$ for the Frobenius norm
and $\|A\|_{\mathrm{op}} = \sup_{\|x\|=1}\|Ax\|$ for the spectral norm.

Two norm conventions arise naturally and we fix one throughout.
For $\theta = (\omega,v) \in \mathfrak{se}(3)$, the 6-vector norm is
\begin{equation}
\label{eq:6vec_norm}
\|\theta\|_F = \bigl(\|\omega\|^2+\|v\|^2\bigr)^{1/2}.
\end{equation}
The standard matrix Frobenius norm of the $4\times4$ homogeneous
representation gives instead
\begin{equation}
\label{eq:mat_norm}
\|X\|_{\mathrm{mat}} = \bigl(\|\Omega\|_F^2+\|v\|^2\bigr)^{1/2}
= \bigl(2\|\omega\|^2+\|v\|^2\bigr)^{1/2},
\end{equation}
because $\|\Omega\|_F = \sqrt{2}\|\omega\|$ for skew-symmetric $\Omega$.
Since $\|\theta\|_F \le \|X\|_{\mathrm{mat}} \le \sqrt{2}\|\theta\|_F$,
the two differ by at most $\sqrt{2}$ and the $R^2$ growth rate is unchanged
under either convention.
We use~\eqref{eq:6vec_norm} throughout.

\subsection{The special Euclidean algebra}

The group $\mathrm{SE}(3)$ consists of orientation-preserving rigid motions
$x \mapsto Rx + t$ with $R \in \mathrm{SO}(3)$ and $t \in \mathbb{R}^3$.
Its Lie algebra $\mathfrak{se}(3)$ consists of $4\times 4$ matrices of the form
\[
X = \begin{pmatrix}\Omega & v\\ 0 & 0\end{pmatrix},
\qquad \Omega \in \mathfrak{so}(3),\quad v \in \mathbb{R}^3,
\]
where $\Omega$ is the $3\times 3$ skew-symmetric matrix with axial vector $\omega$.
We identify elements with pairs $(\omega,v) \in \mathbb{R}^3 \times \mathbb{R}^3$.

\begin{definition}
\label{def:hyperbolic}
An element $H \in \mathfrak{g}$ is \emph{hyperbolic} if it has at least one
eigenvalue with strictly positive real part.
\end{definition}

No element of $\mathfrak{se}(3)$ is hyperbolic: for any $(\omega,v)$,
the eigenvalues are $\{0, 0, \pm i\|\omega\|\}$ in the $4\times 4$ representation
and $\{0, 0, 0, 0, \pm i\|\omega\|\}$ in the adjoint representation,
all with zero real part.
For context, in a semisimple Lie algebra the same notion coincides with
the component along $\mathfrak{a}$ in a Cartan decomposition~\cite{knapp2002,hall2015lie}.

\subsection{The function class}

\begin{definition}
\label{def:bounded_class}
Write $J(\theta) = \widetilde{J}(\exp(\theta))$, and let
$\mathcal{B}_R := \{\theta\in\mathfrak{se}(3) : \|\theta\|_F\le R\}$
denote the closed ball of radius $R$ in the 6-vector norm.
We say $J$ belongs to the \emph{bounded-$(M_1,M_2)$ class} if there exist
constants $M_1, M_2 \ge 0$ such that for all $R > 0$,
\[
\sup_{\exp(\theta)\in\mathcal{M}_R}\|\nabla\widetilde{J}\|_F \le M_1,
\qquad
\sup_{\exp(\theta)\in\mathcal{M}_R}\|\nabla^2\widetilde{J}\|_{\mathrm{op}} \le M_2,
\]
where $\mathcal{M}_R = \{\exp(\theta) : \|\theta\|_F \le R\}$.
The key requirement is that $M_1$ and $M_2$ do not depend on $R$;
this rules out objectives whose derivatives grow with the search radius.
\end{definition}

\section{Operator-Norm Bound and Upper Bound}
\label{sec:main_upper}

The Rodrigues formula says what $\exp(\theta)$ looks like concretely.
The rotation block $\exp(\Omega) \in \mathrm{SO}(3)$ is no surprise;
the less obvious part is that the translation column $\Psi = A(\Omega)v$
is controlled by the integral $A(\Omega) = \int_0^1 \exp(s\Omega)\,ds$,
which we will show is a contraction.

\subsection{The Rodrigues formula}

The formula itself is classical~\cite{murray1994mathematical,park1994robot}.
For $\theta = (\omega,v)$ with $\|\omega\| > 0$, the matrix exponential takes the
block form
\begin{equation}
\label{eq:se3_exp}
\exp(\theta) = \begin{pmatrix}\exp(\Omega) & A(\Omega)v\\ 0 & 1\end{pmatrix},
\end{equation}
where the left Jacobian is
\begin{equation}
\label{eq:rodrigues_A}
A(\Omega) = I
+ \frac{1-\cos\|\omega\|}{\|\omega\|^2}\,\Omega
+ \frac{\|\omega\|-\sin\|\omega\|}{\|\omega\|^3}\,\Omega^2.
\end{equation}
The scalar coefficients are bounded uniformly:
\begin{equation}
\label{eq:rodrigues_coeff}
\left|\frac{1-\cos\|\omega\|}{\|\omega\|^2}\right| \le \tfrac{1}{2},
\qquad
\left|\frac{\|\omega\|-\sin\|\omega\|}{\|\omega\|^3}\right| \le \tfrac{1}{6},
\end{equation}
as follows by examining the Taylor series at $\omega = 0$:
$\lim_{t\to 0}(1-\cos t)/t^2 = 1/2$ and $\lim_{t\to 0}(t-\sin t)/t^3 = 1/6$,
so both bounds are tight at $\omega = 0$.
The constants appearing in all subsequent bounds are explicit analytically derived
estimates; they are not numerically optimised.

\subsection{Operator-norm bound}

\begin{lemma}
\label{lem:se3_opnorm}
If $\theta = (\omega,v) \in \mathfrak{se}(3)$ and $\|\theta\|_F \le R$, then
\begin{equation}
\label{eq:opnorm_bound}
\|\exp(\theta)\|_{\mathrm{op}} \le 1 + R.
\end{equation}
\end{lemma}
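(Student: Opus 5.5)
The plan is to use the block form \eqref{eq:se3_exp} and split $\exp(\theta)$ as a sum of two $4\times4$ matrices:
\[
\exp(\theta) = \begin{pmatrix}\exp(\Omega) & 0\\ 0 & 1\end{pmatrix}
+ \begin{pmatrix}0 & A(\Omega)v\\ 0 & 0\end{pmatrix}.
\]
We then apply the triangle inequality for the operator norm. The first summand is block-diagonal with an orthogonal block $\exp(\Omega)\in\mathrm{SO}(3)$ and a $1$, so it is itself orthogonal and has operator norm exactly $1$. The second summand has rank at most one. It maps $x=(x',x_4)$ to $(x_4 A(\Omega)v,0)$, so its operator norm is $\|A(\Omega)v\|$. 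It therefore suffices to show $\|A(\Omega)v\|\le\|v\|$, since then $\|v\|\le\|\theta\|_F\le R$ gives \eqref{eq:opnorm_bound}. The case $\omega=0$ needs no separate treatment: there $A=I$ and $\exp(\theta)$ is the pure translation, and the same splitting applies verbatim.

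The key step, and the only one requiring thought, is the contraction property $\|A(\Omega)\|_{\mathrm{op}}\le 1$. I would prove it from the integral representation $A(\Omega)=\int_0^1\exp(s\Omega)\,ds$ mentioned before the lemma. That representation follows from differentiating the block exponential $\exp(tX)$ in $t$, or by integrating the Rodrigues series $\exp(s\Omega)=I+\sin(s\|\omega\|)\,\Omega/\|\omega\|+(1-\cos(s\|\omega\|))\,\Omega^2/\|\omega\|^2$ term by term, which reproduces the coefficients in \eqref{eq:rodrigues_A}. Given the integral form, Minkowski's inequality for Bochner integrals yields
\[
\|A(\Omega)v\|\le\int_0^1\|\exp(s\Omega)v\|\,ds=\|v\|,
\]
because each $\exp(s\Omega)$ is orthogonal.

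I expect the obstacle to be purely one of verification: checking that the closed form \eqref{eq:rodrigues_A} matches the integral. This is done by computing $\int_0^1\sin(s t)\,ds=(1-\cos t)/t$ and $\int_0^1(1-\cos(st))\,ds=(t-\sin t)/t$ with $t=\|\omega\|$, then dividing by the appropriate powers of $t$.

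As a fallback, one could avoid the integral and compute the spectrum of the normal matrix $A(\Omega)$ directly. On the axis $\omega$ it acts as $1$. On the orthogonal plane, using $\Omega^2=-t^2 I$ there, it has eigenvalues $1-(t-\sin t)/t\pm i(1-\cos t)/t$, whose modulus squared is $2(1-\cos t)/t^2\le 1$. This route should not be needed.

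Note that the coefficient bounds \eqref{eq:rodrigues_coeff} alone would give only $\|A\|\le 1+\tfrac12 t+\tfrac16 t^2$, which is far too weak. This is why the argument must go through the orthogonality of $\exp(s\Omega)$ rather than the crude term-by-term bound.
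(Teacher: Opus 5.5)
Your proposal is correct and follows the paper's route. The crux in both is $\|A(\Omega)\|_{\mathrm{op}}\le 1$, which holds because $A(\Omega)=\int_0^1\exp(s\Omega)\,ds$ averages orthogonal matrices, so $\|A(\Omega)v\|\le\|v\|\le R$. The only difference is the final step: the paper expands $\|\exp(\theta)(u,s)^\top\|_2^2$ for a unit vector and bounds it by $1+p+p^2\le(1+p)^2$, while your orthogonal-plus-rank-one splitting gives the bound $1+\|A(\Omega)v\|$ in one line from the triangle inequality.
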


\begin{proof}
Write $\Psi = A(\Omega)v$, where $A(\Omega) = \int_0^1 \exp(s\Omega)\,ds$
\cite{murray1994mathematical,iserles2000lie}.
Since each $\exp(s\Omega)$ is orthogonal, the triangle inequality for integrals gives
\begin{equation}
\label{eq:A_contraction}
\|A(\Omega)\|_{\mathrm{op}}
= \left\|\int_0^1 \exp(s\Omega)\,ds\right\|_{\mathrm{op}}
\le \int_0^1 \|\exp(s\Omega)\|_{\mathrm{op}}\,ds
= \int_0^1 1\,ds = 1.
\end{equation}
Hence $\|\Psi\|_2 = \|A(\Omega)v\|_2 \le \|v\|_2 \le \|\theta\|_F \le R$.

For the block matrix, let $(u,s) \in \R^4$ satisfy $\|(u,s)\|=1$.
Then $\exp(\theta)(u,s)^\top = (\exp(\Omega)u + s\Psi,\, s)^\top$, so
\[
\|\exp(\theta)(u,s)^\top\|_2^2
= \|\exp(\Omega)u + s\Psi\|_2^2 + s^2.
\]
Expanding and applying $|\langle \exp(\Omega)u, \Psi\rangle| \le \|u\|\|\Psi\|_2$:
\[
\|\exp(\Omega)u + s\Psi\|_2^2
\le \|u\|^2 + 2|s|\,\|u\|\|\Psi\|_2 + s^2\|\Psi\|_2^2.
\]
Set $a = \|u\|$, $b = |s|$, so $a^2 + b^2 = 1$.
Let $p = \|\Psi\|_2$. Adding $s^2 = b^2$ and using $2ab \le a^2+b^2 = 1$:
\[
\|\exp(\theta)(u,s)^\top\|_2^2
\le a^2 + p + b^2 p^2 + b^2
= (a^2+b^2) + p + b^2 p^2
= 1 + p + b^2 p^2.
\]
Since $b^2 \le 1$, we have $b^2 p^2 \le p^2$, giving
\[
\|\exp(\theta)(u,s)^\top\|_2^2 \le 1 + p + p^2.
\]
Since $p = \|\Psi\|_2 \ge 0$, we have $1 + p + p^2 \le 1 + 2p + p^2 = (1+p)^2$.
Therefore $\|\exp(\theta)(u,s)^\top\|_2 \le 1 + \|\Psi\|_2 \le 1 + R$. \qedhere
\end{proof}

\begin{remark}
\label{rem:contraction}
What makes this work is the contraction $\|A(\Omega)\|_{\mathrm{op}} \le 1$,
which says the Jacobian integral does not stretch vectors.
This is a purely SO$(3)$ phenomenon: $A(\Omega)$ is an average of orthogonal matrices,
so it cannot have operator norm greater than 1.
For a hyperbolic element $H\in\mathfrak{gl}(n)$ with positive real eigenvalue
$\lambda$, the integrand $\exp(sH)$ grows like $e^{s\lambda}$ and the integral
amplifies rather than contracts — $\|\exp(tH)\|_{\mathrm{op}} \ge e^{t\lambda}\to\infty$
— so no bound $\|A(H)\|_{\mathrm{op}}\le 1$ can hold.
This contrast is the structural observation behind~\cite{kc2026trichotomy}.
\end{remark}

\subsection{Fréchet derivative bound}

\begin{lemma}
\label{lem:frechet}
For $\theta=(\omega,v)\in\mathfrak{se}(3)$ with $1 \le \|\theta\|_F\le R$
and any $H\in\mathfrak{se}(3)$ with $\|H\|_F=1$,
\[
\|D\exp_\theta[H]\|_F \;\le\; 1 + R.
\]
\end{lemma}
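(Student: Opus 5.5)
The plan is to write the Fréchet derivative of the matrix exponential in its integral form,
\[
D\exp_\theta[H] \;=\; \int_0^1 \exp(sX)\,H\,\exp\bigl((1-s)X\bigr)\,ds ,
\]
where $X$ is the $4\times4$ matrix of $\theta=(\omega,v)$, and to compute the integrand blockwise with the Rodrigues form \eqref{eq:se3_exp}. Write $H=(\xi,h)$ with skew matrix $\Xi$, and put $R_s=\exp(s\Omega)$ and $\Psi_s=A(s\Omega)\,sv$, so that
\[
\exp(sX)=\begin{pmatrix}R_s&\Psi_s\\0&1\end{pmatrix}.
\]
Multiplying out, I expect
\[
\exp(sX)\,H\,\exp\bigl((1-s)X\bigr)=\begin{pmatrix}R_s\Xi R_{1-s} & R_s\Xi\Psi_{1-s}+R_s h\\ 0&0\end{pmatrix}.
\]
This is a rotational block, which is a rotated copy of $\Xi$ (left-translated by $R_1=R_sR_{1-s}$), together with a translational column.

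Next I bound the two pieces separately. In the rotational block, orthogonal invariance of the Frobenius norm gives $\|R_s\Xi R_{1-s}\|_F=\|\Xi\|_F$; in axial-vector terms this has size $\|\xi\|$. For the translation column I use Lemma~\ref{lem:se3_opnorm}'s contraction \eqref{eq:A_contraction} applied to $A((1-s)\Omega)$. This gives $\|\Psi_{1-s}\|\le (1-s)\|v\|\le R$. Together with $\|\Xi u\|=\|\xi\times u\|\le\|\xi\|\|u\|$, it yields
\[
\|R_s\Xi\Psi_{1-s}+R_sh\|\le \|\xi\|R+\|h\| .
\]
Integrating over $s\in[0,1]$ preserves both bounds by the triangle inequality for integrals.

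Combining the pieces, I set $a=\|\xi\|$ and $b=\|h\|$, with $a^2+b^2=1$. In the 6-vector convention \eqref{eq:6vec_norm} the derivative then has norm at most
\[
\bigl\|(a,\;b+aR)\bigr\|\le\|(a,b)\|+\|(0,aR)\|=1+aR\le 1+R .
\]
The main obstacle is the norm convention for the output, which is a $4\times4$ tangent matrix. If $\|\cdot\|_F$ there means the genuine matrix Frobenius norm, the rotational block contributes $\sqrt2\,a$ instead of $a$. In that case I would instead expand
\[
2a^2+(b+aR)^2=1+a^2+2abR+a^2R^2 .
\]
Then I would bound $2ab\le1$ and $a^2\le1$ to get at most $1+(1+R^2)+R$. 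This is $\le(1+R)^2=1+2R+R^2$ exactly when $R\ge1$. I expect this is precisely where the hypothesis $\|\theta\|_F\ge1$ in the statement enters, so I would present the matrix-norm version as the main argument and note that the 6-vector version needs no lower bound on $\|\theta\|_F$.
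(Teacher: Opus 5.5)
Your proof is correct. It reaches the same block decomposition as the paper by a somewhat different computation.

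The paper differentiates the Rodrigues form \eqref{eq:se3_exp} piece by piece. It bounds the rotation block through the $\mathfrak{so}(3)$ integral formula for $D\exp_\Omega$. It bounds the coupling term $(D_\omega A)[h_\omega]v$ by differentiating $A(\Omega)=\int_0^1 e^{s\Omega}\,ds$ under the integral, which gives $\tfrac12\|v\|\|h_\omega\|$. You apply the $4\times 4$ formula $\int_0^1 e^{sX}He^{(1-s)X}\,ds$ once, and the same two translation terms appear directly: $\int_0^1 R_s h\,ds=A(\Omega)h$ and $\int_0^1 R_s\Xi\Psi_{1-s}\,ds=(D_\omega A)[\xi]v$. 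After that you need only the contraction \eqref{eq:A_contraction} and orthogonal invariance, with no separate estimate on $D\exp_{s\Omega}$. Replacing $(1-s)\|v\|$ by $R$ costs you a factor of $2$ in the coupling; integrating the factor $(1-s)$ instead would recover the paper's $\tfrac12$.

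The more substantive difference is how the blocks are combined. The paper adds the block norms linearly and then applies Cauchy--Schwarz. It charges the rotation block only $\|h_\omega\|$ by reading the output in the 6-vector identification. That linear addition is also why it needs $R\ge 2/3$ even in that convention.

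You instead split the Frobenius norm exactly over the blocks (sum of squares), and you also treat the genuine matrix Frobenius norm, where the rotation block costs $\sqrt2\,a$. There your bound $2a^2+(b+aR)^2\le 2+R+R^2\le(1+R)^2$ holds for all $R\ge1$. Under that convention the paper's linear combination would give only $\sqrt{(\sqrt2+R/2)^2+1}$, which already exceeds $2$ at $R=1$. So your route is the more robust one with respect to the output norm. You are also right that in the 6-vector convention your argument needs no lower bound on $\|\theta\|_F$.
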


\begin{proof}
Write $H=(h_\omega, h_v)$ with $\|h_\omega\|^2+\|h_v\|^2=1$.
The Fréchet derivative splits along the block structure of $\exp(\theta)$:
\[
D\exp_\theta[H] =
\begin{pmatrix}
D\exp_\omega[h_\omega] & A(\Omega)h_v + (D_\omega A)[h_\omega]v \\
0 & 0
\end{pmatrix}.
\]
For the rotation block, the standard integral formula~\cite{iserles2000lie} gives
$D\exp_\Omega[h_\omega] = \exp(\Omega)\int_0^1 e^{-s\Omega}\,\hat{h}_\omega\,e^{s\Omega}\,ds$,
where $\hat{h}_\omega$ is the skew-symmetric matrix with axial vector $h_\omega$.
Since conjugation by orthogonal matrices preserves the Frobenius norm and
$\|\hat{h}_\omega\|_F = \sqrt{2}\|h_\omega\|$, integrating over $[0,1]$ gives
\begin{equation}
\label{eq:dexp_rot_bound}
\|D\exp_\Omega[h_\omega]\|_F \le \sqrt{2}\|h_\omega\|.
\end{equation}
Under the 6-vector convention~\eqref{eq:6vec_norm}, $\|h_\omega\| = \|\hat{h}_\omega\|_F/\sqrt{2}$,
so bound~\eqref{eq:dexp_rot_bound} gives
$\|D\exp_\Omega[h_\omega]\|_F \le \sqrt{2}\|h_\omega\| = \|\hat{h}_\omega\|_F$.
Since the 6-vector norm of the rotation input is $\|h_\omega\|$
(not $\sqrt{2}\|h_\omega\|$), the rotation block contributes $\|h_\omega\|$
when $\|D\exp_\theta[H]\|_F$ is evaluated by summing the squared
Frobenius norms of the output blocks under the same identification.
The translation block has two terms.
First, $\|A(\Omega)h_v\|\le\|h_v\|$ since $\|A(\Omega)\|_{\mathrm{op}}\le 1$
(Lemma~\ref{lem:se3_opnorm}).
Second, by differentiating the integral $A(\Omega) = \int_0^1\exp(s\Omega)\,ds$,
$D_\omega A[h_\omega] = \int_0^1 s\,D\exp_{s\Omega}[h_\omega]\,ds$.
Using the same integral formula, $\|D\exp_{s\Omega}[h_\omega]\|_F \le \|h_\omega\|$
for each $s\in[0,1]$ (independent of $s$ since orthogonality is uniform).
Hence $\|(D_\omega A)[h_\omega]v\| \le \|v\|\int_0^1 s\,ds\,\|h_\omega\|
= \tfrac{1}{2}\|v\|\|h_\omega\|$.
Putting the two block estimates together:
\[
\|D\exp_\theta[H]\|_F
\le \bigl(1+\tfrac{R}{2}\bigr)\|h_\omega\| + \|h_v\|.
\]
By the Cauchy--Schwarz inequality applied to vectors
$(1+R/2, 1)$ and $(\|h_\omega\|, \|h_v\|)$ with $\|h_\omega\|^2+\|h_v\|^2=1$:
\[
\|D\exp_\theta[H]\|_F \le \sqrt{(1+R/2)^2 + 1}.
\]
For $R \ge 1$, one checks $(1+R/2)^2 + 1 \le (1+R)^2$
(equivalent to $3R^2/4 + R - 1 \ge 0$, which holds for $R \ge 2/3$),
so $\|D\exp_\theta[H]\|_F \le 1+R$. \qedhere
\end{proof}

\subsection{Upper bound on $L(R;\mathfrak{se}(3))$}

\begin{proposition}
\label{prop:upper}
For $J$ in the bounded-$(M_1,M_2)$ class and $R \ge 1$:
\[
L(R;\mathfrak{se}(3)) \le C(\mathfrak{se}(3))\,(M_1+M_2)\,R^2,
\quad C(\mathfrak{se}(3)) = \tfrac{5(\sqrt{2}+1)}{3} \approx 4.02.
\]
\end{proposition}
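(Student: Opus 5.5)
We will bound the Hessian of $J=\widetilde J\circ\exp$ on $\mathcal{B}_R$ and use the mean value theorem: since $\mathcal{B}_R$ is convex, $L(R;\mathfrak{se}(3))\le\sup_{\theta\in\mathcal{B}_R}\|\nabla^2 J(\theta)\|_{\mathrm{op}}$. By the chain rule, for unit $H\in\mathfrak{se}(3)$,
\[
D^2J_\theta[H,H] = \nabla^2\widetilde J(\exp\theta)\bigl[D\exp_\theta[H],D\exp_\theta[H]\bigr] + \bigl\langle \nabla\widetilde J(\exp\theta),\, D^2\exp_\theta[H,H]\bigr\rangle .
\]
By polarization and symmetry, it suffices to control the quadratic form. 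The first term is bounded by $M_2\|D\exp_\theta[H]\|_F^2\le M_2(1+R)^2$, using Lemma~\ref{lem:frechet}. For $\|\theta\|_F<1$, we will either note that the same estimate of Lemma~\ref{lem:frechet} gives $\sqrt{(1+R/2)^2+1}\le 1+R$ for $R\ge 1$, or simply monotonically enlarge $R$. For $R\ge1$ we have $(1+R)^2\le 4R^2$.

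The second term requires a bound on the second Fréchet derivative. Using the block structure, the rotation block $D^2\exp_\Omega[h_\omega,h_\omega]$ is bounded by a constant times $\|h_\omega\|^2$. This follows by differentiating the integral formula $\exp(\Omega)\int_0^1 e^{-s\Omega}\hat h_\omega e^{s\Omega}\,ds$ once more. Each further derivative produces an iterated integral of products of orthogonal matrices and copies of $\hat h_\omega$, so its norm is at most $\|\hat h_\omega\|_F^2$ times $\int$-weights. The translation block is
\[
D^2(A(\Omega)v)[H,H]=2(D_\omega A)[h_\omega]h_v+(D^2_\omega A)[h_\omega,h_\omega]v .
\]
The first piece is at most $\|h_\omega\|\|h_v\|$, by the estimate $\|(D_\omega A)[h_\omega]\|\le\frac12\|h_\omega\|$ from the proof of Lemma~\ref{lem:frechet}. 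The second piece is at most $c\,R\|h_\omega\|^2$, obtained by writing $D^2_\omega A=\int_0^1 s^2 D^2\exp_{s\Omega}\,ds$ and reusing the rotation-block bound. Altogether, $\|D^2\exp_\theta[H,H]\|_F\le c_1+c_2R\le c'R^2$ for $R\ge1$, and the second term is therefore at most $M_1c'R^2$.

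Summing the two contributions gives $L\le (4M_2+c'M_1)R^2\le C(M_1+M_2)R^2$ with $C=\max(4,c')$. The main obstacle is bookkeeping the explicit constant so that it reaches exactly $5(\sqrt2+1)/3$. The factor $\sqrt2+1$ presumably comes from the $\sqrt2$ in $\|\hat h_\omega\|_F=\sqrt2\|h_\omega\|$ combined with the $1$ from the translation part. The factor $5/3$ presumably comes from collecting integral weights such as $1+\frac12+\frac16$ or $\frac13+\dots$, together with the coefficient bounds \eqref{eq:rodrigues_coeff}. I would first try to obtain the second-derivative bound directly from the Rodrigues form \eqref{eq:rodrigues_A}, differentiating the scalar coefficients using \eqref{eq:rodrigues_coeff}, and then dominate every $R$-power by $R^2$ via $R\ge1$. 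If the constant comes out smaller, the stated $4.02$ still holds as a (non-optimised) upper bound.
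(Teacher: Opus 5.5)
Your argument is correct. It is essentially the infinitesimal version of the paper's proof, with a different and cleaner estimate for the second-order term.

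The paper works with finite differences. It bounds $\|\nabla J(\theta)-\nabla J(\theta')\|_F$ by $M_1\|D\exp_\theta-D\exp_{\theta'}\|_{\mathrm{op}}+M_2\|D\exp_{\theta'}\|_{\mathrm{op}}\|\exp\theta-\exp\theta'\|_F$. It controls the second piece by $(1+R)^2M_2$ via Lemma~\ref{lem:frechet}, exactly as you do. It gets the Lipschitz constant $\sqrt2+\tfrac R2$ of $\theta\mapsto D\exp_\theta$ by differencing the integral formula and differentiating the Rodrigues form~\eqref{eq:rodrigues_A}; the bound $C_J\le\tfrac12$ there is stated without derivation.

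You instead bound $D^2\exp_\theta$ directly through iterated integrals of orthogonal matrices, which makes every constant checkable. You also evaluate $\nabla^2\widetilde J$ only at points $\exp\theta$ with $\theta\in\mathcal B_R$, so you use Definition~\ref{def:bounded_class} exactly where it applies. The paper's mean-value step for $\nabla\widetilde J(\exp\theta)-\nabla\widetilde J(\exp\theta')$ has to be read along the curve $t\mapsto\exp(\theta'+t(\theta-\theta'))$ to stay inside $\mathcal M_R$.

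On the constant, there is nothing to reverse-engineer. In the paper $5(\sqrt2+1)/3$ is just some number $\ge 4$, chosen so that $(1+R)^2\le CR^2$ and $\sqrt2+\tfrac R2\le CR$ for $R\ge1$. It plays exactly the role of your $\max(4,c')$.

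What you do need is $c'\le 4.02$, and this requires keeping one factor in operator norm. Use the second-order formula $D^2\exp_X[Y,Y]=2\int_{0\le s_1\le s_2\le 1}e^{(1-s_2)X}Ye^{(s_2-s_1)X}Ye^{s_1X}\,ds_1\,ds_2$ with $X=\Omega$ skew. It gives $\|D^2\exp_\Omega[\hat h_\omega,\hat h_\omega]\|_F\le\|\hat h_\omega\|_{\mathrm{op}}\|\hat h_\omega\|_F=\sqrt2\|h_\omega\|^2$. It also gives $\|(D^2_\omega A)[h_\omega,h_\omega]\|_{\mathrm{op}}\le\int_0^1 s^2\,ds\,\|h_\omega\|^2=\tfrac13\|h_\omega\|^2$. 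Together with $\|h_\omega\|\|h_v\|\le\tfrac12$, you get $\|D^2\exp_\theta[H,H]\|_F\le\sqrt2+\tfrac12+\tfrac R3\le(\sqrt2+\tfrac56)R$ for $R\ge1$. Hence $C=4$ already works, which implies the stated bound.

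By contrast, suppose you apply the product rule to $\exp(\Omega)\int_0^1 e^{-s\Omega}\hat h_\omega e^{s\Omega}\,ds$ and bound both copies of $\hat h_\omega$ in Frobenius norm, with integral weights $1+2\int_0^1 s\,ds=2$. Then the rotation block comes out as $4\|h_\omega\|^2$, and $c'$ exceeds $4.02$ at $R=1$.
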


\begin{proof}
By the chain rule $\nabla J(\theta) = (D\exp_\theta)^* [\nabla\widetilde{J}(\exp(\theta))]$
and the mean-value inequality:
\begin{align}
\label{eq:chain}
\|\nabla J(\theta)-\nabla J(\theta')\|_F
&\le \|D\exp_\theta - D\exp_{\theta'}\|_{\mathrm{op}}\,M_1 \nonumber\\
&\quad + \|D\exp_{\theta'}\|_{\mathrm{op}}\,M_2\,\|\exp(\theta)-\exp(\theta')\|_F.
\end{align}

\textbf{Bounding Term 2.}
The Fréchet-derivative bounds used here may be interpreted as
structure-aware conditioning estimates for the matrix exponential on
$\mathfrak{se}(3)$, in the spirit of~\cite{kenney1989condition,dieci2001conditioning}.
Lemma~\ref{lem:frechet} gives $\|D\exp_{\theta'}\|_{\mathrm{op}} \le 1+R$,
so by the mean-value inequality,
$\|\exp(\theta)-\exp(\theta')\|_F \le (1+R)\|\theta-\theta'\|_F$.
So Term~2 $\le (1+R)^2 M_2 \|\theta-\theta'\|_F = \mathcal{O}(R^2) M_2 \|\theta-\theta'\|_F$.

\textbf{Bounding Term 1.}
We need the Lipschitz constant of $\theta \mapsto D\exp_\theta$.
From the block form~\eqref{eq:se3_exp}, the Fréchet derivative splits as
\[
D\exp_\theta[H] = \begin{pmatrix} D\exp_\Omega[h_\omega] & A(\Omega)h_v + (D_\omega A)[h_\omega]v \\ 0 & 0\end{pmatrix}.
\]
\textit{Rotation block difference.}
Differencing the integral formula $D\exp_\Omega[H] = \exp(\Omega)\int_0^1 e^{-s\Omega}H e^{s\Omega}ds$
at $\Omega$ and $\Omega'$ and bounding the integrand by
$\|\exp(\Omega)-\exp(\Omega')\|_F\le\|\Omega-\Omega'\|_F$
(since $\exp$ is Lipschitz-1 on $\mathfrak{so}(3)$,
as $\|\exp(\Omega)-\exp(\Omega')\|_F\le\|\Omega-\Omega'\|_F$
follows from $\exp(\Omega)=\int_0^1 D\exp_{\Omega'+(\Omega-\Omega')t}[\Omega-\Omega']\,dt$
and $\|D\exp_{\Omega''}\|_{\mathrm{op}}\le 1$ for $\Omega''\in\mathfrak{so}(3)$) gives
$\|D\exp_\Omega-D\exp_{\Omega'}\|_{\mathrm{op}}\le\|\Omega-\Omega'\|_F=\sqrt{2}\|\omega-\omega'\|$,
contributing $\sqrt{2}\|\theta-\theta'\|_F$ to Term~1.

\textit{Translation block difference.}
The coupling $(D_\omega A)[h_\omega]v$ has norm $\le \frac{1}{2}\|v\|\|h_\omega\| \le \frac{R}{2}\|h_\omega\|$
(from Lemma~\ref{lem:frechet}).
The difference $(D_\omega A(\Omega) - D_\omega A(\Omega'))[h_\omega]v$ requires bounding
$\|D_\omega A(\Omega) - D_\omega A(\Omega')\|_{\mathrm{op}}$.
Differentiating~\eqref{eq:rodrigues_A} explicitly:
\begin{equation}
\label{eq:dA_explicit}
D_\omega A[h_\omega] = \frac{\sin\|\omega\|}{\|\omega\|}h_\omega\times
+ \alpha(\|\omega\|)(\omega\cdot h_\omega)\Omega
+ \beta(\|\omega\|)(\omega\times h_\omega)\times,
\end{equation}
where $\alpha,\beta$ are bounded trigonometric functions with
$|\alpha(\|\omega\|)| \le \tfrac{1}{3}$ and $|\beta(\|\omega\|)| \le \tfrac{1}{2}$.
The Lipschitz constant of $\omega \mapsto D_\omega A$ satisfies
$\|D_\omega A(\Omega) - D_\omega A(\Omega')\|_{\mathrm{op}} \le C_J\|\omega-\omega'\|$
where, from the coefficient bounds~\eqref{eq:rodrigues_coeff} and the explicit
form~\eqref{eq:dA_explicit}, $C_J \le \tfrac{1}{2}$.
Multiplying by $\|v\| \le R$ gives translation contribution $\tfrac{1}{2} R \|\theta-\theta'\|_F$.

To assemble the two contributions, use the block Frobenius estimate
\begin{equation}
\label{eq:block_F}
\left\|\begin{pmatrix}A & B\\ 0 & 0\end{pmatrix}\right\|_F
\le \|A\|_F+\|B\|_F,
\end{equation}
applied to the rotational block difference ($A$) and translation block difference ($B$).
The rotation block contributes $\sqrt{2}\|\omega-\omega'\|$ (from~\eqref{eq:dexp_rot_bound}).
The translation coupling satisfies, using $\|J_L(\Omega)-J_L(\Omega')\|_{\mathrm{op}}\le
\tfrac{1}{2}\|\omega-\omega'\|$ (which follows from $C_J\le\tfrac{1}{2}$):
\[
\|(D_\omega A(\Omega)-D_\omega A(\Omega'))[h_\omega]v\|
\le \tfrac{1}{2}\|\omega-\omega'\|\cdot\|v\|
\le \tfrac{1}{2}R\,\|\theta-\theta'\|_F.
\]
The block estimate~\eqref{eq:block_F} then gives, using $\|\omega-\omega'\|\le\|\theta-\theta'\|_F$:
\[
\|D\exp_\theta - D\exp_{\theta'}\|_{\mathrm{op}}
\le \bigl(\sqrt{2}+\tfrac{1}{2}R\bigr)\|\theta-\theta'\|_F.
\]
For $R\ge 1$, $\sqrt{2}+\tfrac{1}{2}R \le \tfrac{5(\sqrt{2}+1)}{3}R$
(since $C = \tfrac{5(\sqrt{2}+1)}{3}\approx 4.02 > \tfrac{1}{2}$, so
$CR - \tfrac{R}{2} = (C-\tfrac{1}{2})R \ge \sqrt{2}$ for $R\ge \frac{\sqrt{2}}{C-1/2}\approx 0.41$),
giving:
\begin{equation}
\label{eq:term1}
\|D\exp_\theta - D\exp_{\theta'}\|_{\mathrm{op}}
\le \tfrac{5(\sqrt{2}+1)}{3}\,R\,\|\theta-\theta'\|_F.
\end{equation}

\textbf{Combining.}
Inserting both bounds into~\eqref{eq:chain}:
\[
\|\nabla J(\theta)-\nabla J(\theta')\|_F
\le \Bigl[\tfrac{5(\sqrt{2}+1)}{3}R\cdot M_1 + (1+R)^2 M_2\Bigr]\|\theta-\theta'\|_F,
\]
giving $L(R;\mathfrak{se}(3)) \le C(\mathfrak{se}(3))(M_1+M_2)R^2$ with
$C(\mathfrak{se}(3)) = \frac{5(\sqrt{2}+1)}{3} \approx 4.02$.
\end{proof}

\section{Polynomial Lower Bound}
\label{sec:lower_bound}

The upper bound shows $L$ cannot grow faster than $R^2$.
To see that it actually does grow that fast, we need an explicit example.
This matters for two reasons: it confirms the $R^2$ regime is not an artefact
of the upper-bound proof technique, and the explicit constant $0.0505$ gives
a concrete lower limit on how aggressively step sizes must decay as $R$ grows.
We exhibit such an example as follows: pick a point $\theta_R$ on the sphere $\|\theta\|_F = R$
where the objective $J^* = \frac{1}{2}\|t(\exp(\theta))\|^2$ is
curving sharply, compute the Hessian, and read off the lower bound from there.

\subsection{A closed-form formula}

The following closed-form formula is used in the proof.

\begin{lemma}
\label{lem:se3_block_closed}
Let $\omega = (\phi_0,0,0)^\top$ and $v = (0,0,r)^\top$ with $\phi_0, r > 0$.
Then
\begin{equation}
\label{eq:se3_translation}
\exp(\theta) = \begin{pmatrix}\exp(\phi_0\Omega_1) & p(\phi_0,r)\\ 0 & 1\end{pmatrix},
\qquad
p(\phi_0,r) = r\begin{pmatrix}0\\ -\tfrac{1-\cos \phi_0}{\phi_0}\\[4pt]
1-\tfrac{\phi_0-\sin \phi_0}{\phi_0}\end{pmatrix},
\end{equation}
where $\Omega_1 = \mathrm{skew}(e_1)$.
\end{lemma}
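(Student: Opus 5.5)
The plan is to substitute directly into the Rodrigues formula \eqref{eq:se3_exp}--\eqref{eq:rodrigues_A}. With $\omega=(\phi_0,0,0)^\top$ we have $\|\omega\|=\phi_0>0$ and $\Omega=\phi_0\Omega_1$, where $\Omega_1=\mathrm{skew}(e_1)$ acts by $x\mapsto e_1\times x$. The rotation block of \eqref{eq:se3_exp} is $\exp(\Omega)=\exp(\phi_0\Omega_1)$ with nothing to check. So the whole content is computing the translation column $p=A(\Omega)v$ with $v=re_3$.

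First I compute the action of $\Omega_1$ and $\Omega_1^2$ on $e_3$ from cross-product identities:
\[
\Omega_1 e_3=e_1\times e_3=-e_2,
\qquad
\Omega_1^2 e_3=e_1\times(-e_2)=-e_3 .
\]
Hence $\Omega v=-r\phi_0 e_2$ and $\Omega^2 v=-r\phi_0^2 e_3$.

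Next I insert these into \eqref{eq:rodrigues_A}:
\[
A(\Omega)v
= r e_3
+ \frac{1-\cos\phi_0}{\phi_0^2}\,(-r\phi_0 e_2)
+ \frac{\phi_0-\sin\phi_0}{\phi_0^3}\,(-r\phi_0^2 e_3).
\]
Simplifying gives the components $0$, $-r(1-\cos\phi_0)/\phi_0$ and $r\bigl(1-(\phi_0-\sin\phi_0)/\phi_0\bigr)$, which is exactly $p(\phi_0,r)$ in \eqref{eq:se3_translation}. The first component vanishes because none of $e_3$, $\Omega_1 e_3$ or $\Omega_1^2 e_3$ has an $e_1$-component.

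There is no genuine obstacle here. The only place to be careful is the sign convention $\mathrm{skew}(a)x=a\times x$ and the orientation $e_1\times e_2=e_3$, since the sign of the middle entry depends on it. As a sanity check I would confirm the limit $\phi_0\to0$, which gives $p\to re_3$, consistent with $\exp$ of a pure translation. Alternatively, one can cross-check by evaluating $\int_0^1\exp(s\phi_0\Omega_1)\,ds\;re_3$ directly: $\exp(s\phi_0\Omega_1)e_3=(0,-\sin s\phi_0,\cos s\phi_0)^\top$, and integrating over $s\in[0,1]$ gives $(0,-(1-\cos\phi_0)/\phi_0,\ \sin\phi_0/\phi_0)^\top$. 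Since $1-(\phi_0-\sin\phi_0)/\phi_0=\sin\phi_0/\phi_0$, this matches.
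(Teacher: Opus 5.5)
Your proposal is correct and matches the paper's proof: substitute $\|\omega\|=\phi_0$ into the Rodrigues formula \eqref{eq:rodrigues_A}, then use $\Omega_1 e_3=-e_2$ and $\Omega_1^2 e_3=-e_3$ to read off $p=A(\Omega)v$. Your integral cross-check via $\int_0^1\exp(s\phi_0\Omega_1)\,ds\;re_3$ is a sound independent confirmation that yields the $\sin\phi_0/\phi_0$ form used later in Theorem~\ref{thm:lower}, though the lemma does not need it.
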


\begin{proof}
Apply~\eqref{eq:rodrigues_A} with $\|\omega\| = \phi_0$:
$A(\phi_0\Omega_1) = I + \frac{1-\cos \phi_0}{\phi_0}\Omega_1 + \frac{\phi_0-\sin \phi_0}{\phi_0}\Omega_1^2$.
The identities $\Omega_1 e_3 = -e_2$ and $\Omega_1^2 e_3 = -e_3$ give directly
\begin{align*}
p &= re_3 + \tfrac{1-\cos \phi_0}{\phi_0}\,r(-e_2) + \tfrac{\phi_0-\sin \phi_0}{\phi_0}\,r(-e_3)\\
&= r\bigl(0,\;
-\tfrac{1-\cos \phi_0}{\phi_0},\;
1-\tfrac{\phi_0-\sin \phi_0}{\phi_0}\bigr)^\top. \qedhere
\end{align*}
\end{proof}

\subsection{The lower bound}

\begin{theorem}
\label{thm:lower}
Let $J^*(\theta) = \frac{1}{2}\|t(\exp(\theta))\|_2^2$,
where $t(\cdot)$ extracts the translation.
For $R \ge 2$, the on-ball point $\theta_R = (e_1,\sqrt{R^2-1}\,e_3)$
satisfies $\|\theta_R\|_F = R$ exactly, and
\begin{equation}
\label{eq:lower_bound}
L_{J^*}(R;\mathfrak{se}(3)) \ge \tfrac{3}{8}|\phi''(1)|\,R^2 \approx 0.0505\,R^2,
\end{equation}
where $\phi''(1) = -8\sin 1-10\cos 1+12 \approx -0.1348$.
\end{theorem}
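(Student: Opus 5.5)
The plan is to bound $L_{J^*}(R;\se(3))$ below by a single second directional derivative of $J^*$, taken at (or arbitrarily near) the on-ball point $\theta_R$. The first step is the general reduction. $J^*$ is smooth, being a polynomial in the entries of $\exp(\theta)$. For any interior point $\theta$ of $\mathcal{B}_R$ and any unit direction $H$, the difference quotients $\bigl(\nabla J^*(\theta+hH)-\nabla J^*(\theta)\bigr)/h$ are admissible in the supremum defining $L$ for small $h$. These quotients converge to $\nabla^2 J^*(\theta)H$. Hence
\[
L_{J^*}(R;\se(3)) \ge \|\nabla^2 J^*(\theta)H\|_F \ge |\langle H,\nabla^2 J^*(\theta)H\rangle|
\]
for every interior $\theta$. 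Letting $\theta\to\theta_R$ from inside the ball (for example along $\lambda\theta_R$ with $\lambda\uparrow 1$) and using continuity of the Hessian, the same bound holds at $\theta_R$ itself. So it suffices to show $|\partial_{\phi}^2 J^*(\theta_R)| \ge \tfrac38|\phi''(1)|R^2$, where the derivative is taken in the unit direction $H=(e_1,0)$, which moves only the rotation angle.

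The second step is an exact formula for $J^*$ along the line $\phi\mapsto(\phi e_1, r e_3)$, with $r=\sqrt{R^2-1}$ fixed. Lemma~\ref{lem:se3_block_closed} gives the translation $p(\phi,r)$ explicitly. Since $1-\tfrac{\phi-\sin\phi}{\phi}=\tfrac{\sin\phi}{\phi}$, we get
\[
\|p\|^2 = r^2\,\frac{(1-\cos\phi)^2+\sin^2\phi}{\phi^2} = r^2\,\frac{2(1-\cos\phi)}{\phi^2}.
\]
Writing $\phi(x)=2(1-\cos x)/x^2$ for the squared norm of $A(x\Omega_1)e_3$, this says $J^*(\phi e_1, re_3)=\tfrac12 r^2\phi(\phi)$. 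Hence the needed second derivative is exactly $\tfrac12 r^2\phi''(1)$.

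The third step is the one-variable calculus. Differentiate $h(x)=(1-\cos x)x^{-2}$ twice to get
\[
h''(x)=\frac{\cos x}{x^2}-\frac{4\sin x}{x^3}+\frac{6(1-\cos x)}{x^4}.
\]
This gives $\phi''(1)=2h''(1)=12-8\sin 1-10\cos 1\approx-0.1348$, which matches the statement and in particular is nonzero. Finally, for $R\ge2$ we have $r^2=R^2-1\ge\tfrac34R^2$. Therefore $\tfrac12 r^2|\phi''(1)|\ge\tfrac38|\phi''(1)|R^2\approx0.0505\,R^2$, which is \eqref{eq:lower_bound}.

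The calculations are routine. The step needing care is the first one: $\theta_R$ lies on the boundary of $\mathcal{B}_R$, and the direction $H=(e_1,0)$ is not tangent to the sphere (it has inner product $1$ with $\theta_R$). So a naive symmetric difference quotient about $\theta_R$ would use a point outside the ball. I would handle this by the approximation argument above, either with one-sided quotients $h<0$ or by working at $\lambda\theta_R$ and letting $\lambda\to1$. An alternative is to prove the bound at the nearby interior point $(e_1,\sqrt{R^2-1-\varepsilon}\,e_3)$ and let $\varepsilon\to0$. Both routes rely only on the continuity of $\nabla^2 J^*$.
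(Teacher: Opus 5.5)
Your proposal is correct and follows the paper's proof. It uses the same point $\theta_R$, the same unit direction $H=(e_1,\mathbf{0})$, the same reduction $L_{J^*}\ge|\nabla^2J^*(\theta_R)[H,H]|=\tfrac{r^2}{2}|\phi''(1)|$, and the same estimate $R^2-1\ge\tfrac34R^2$ for $R\ge2$. Your two differences are small improvements rather than a different route: you simplify $\phi(s)$ to $2(1-\cos s)/s^2$ before differentiating, instead of using the paper's $f_1,f_2$ product-rule computation, and you justify the Hessian bound at the boundary point by one-sided quotients or continuity, which the paper only asserts.
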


\begin{proof}
First, set $r = \sqrt{R^2-1}$ and perturb the rotation component:
$\theta(h) = (1+h,0,0,\,0,0,r)$ passes through $\theta_R$ at $h=0$
(here $\phi_0=1$ at the base point).
The point $\theta_R$ was chosen because the translation component
curves most sharply as the rotation angle passes through $\phi_0=1$:
this is where the cross-coupling between rotation and translation
contributes most strongly to the Hessian.
By Lemma~\ref{lem:se3_block_closed},
\[
J^*(\theta(h)) = \tfrac{1}{2}r^2\,\phi(1+h),
\qquad
\phi(s) = \Bigl(\tfrac{1-\cos s}{s}\Bigr)^{2}
+ \Bigl(\tfrac{\sin s}{s}\Bigr)^{2},
\quad s = \phi_0+h.
\]
Since
\[
L_{J^*}(R) = \sup_{\theta\ne\theta'\in\mathcal{B}_R}
\frac{\|\nabla J^*(\theta)-\nabla J^*(\theta')\|}{\|\theta-\theta'\|}
\ge \sup_{\theta\in\mathcal{B}_R}\|\nabla^2 J^*(\theta)\|_{\mathrm{op}},
\]
where the inequality follows because the Lipschitz quotient limits to the
Hessian norm as $\theta'\to\theta$, it suffices to exhibit a point
where $|\nabla^2 J^*(\theta)[H,H]|$ is large.
To make this concrete, we compute the Hessian of $J^*$ at the
point $\theta_R$ in the direction $H = (e_1,\mathbf{0})$,
which perturbs only the rotation angle and has unit 6-vector norm.
The translation column of $\exp(\theta_R + hH)$ is captured by $\phi(1+h)$,
so the curvature of $J^*$ in this direction is $\frac{r^2}{2}|\phi''(1)|$.
\begin{equation}
\label{eq:hessian_formula}
|\nabla^2 J^*(\theta_R)[H,H]|
= \tfrac{r^2}{2}|\phi''(1)|
= \tfrac{R^2-1}{2}|\phi''(1)|.
\end{equation}

Next, compute $\phi''(1)$ explicitly. It controls how fast the translation of $\exp(\theta_R)$
curves as the rotation angle is perturbed.
To compute it, write $f_1(s) = (1-\cos s)/s$ and $f_2(s) = (s-\sin s)/s$,
so that $\phi = f_1^2+(1-f_2)^2$.
The relevant derivatives at $s=1$ are:
\[
f_1'(s) = \frac{s\sin s+\cos s-1}{s^2},
\quad
f_1''(s) = \frac{s^2\cos s-2s\sin s-2\cos s+2}{s^3};
\]
\[
f_2'(s) = \frac{\sin s - s\cos s}{s^2},
\quad
f_2''(s) = \frac{s^2\sin s+2s\cos s-2\sin s}{s^3}.
\]
At $s=1$: $f_1'(1)=\sin 1+\cos 1-1$,\;
$f_1''(1)=2-\cos 1-2\sin 1$,\;
$f_2'(1)=\sin 1-\cos 1$,\;
$f_2''(1)=2\cos 1-\sin 1$.
Evaluating $\phi'' = 2(f_1')^2 + 2f_1 f_1'' + 2(f_2')^2 - 2(1-f_2)f_2''$ at $s=1$:
\begin{equation}
\label{eq:phi_pp}
\phi''(1) = -8\sin 1-10\cos 1+12.
\end{equation}
Numerically, $-8(0.84147)-10(0.54030)+12 = -0.1348$,
confirmed by direct symbolic differentiation.

Since $\theta_R \in \mathcal{B}_R$ and $\|H\|_F = 1$, the lower bound follows:
\[
L_{J^*}(R;\mathfrak{se}(3)) \ge |\nabla^2 J^*(\theta_R)[H,H]| = \tfrac{R^2-1}{2}|\phi''(1)|.
\]
The $R^2-1$ factor reflects that the adversarial point sits just inside the ball.
For $R \ge 2$, one has $R^2-1 \ge \frac{3}{4}R^2$ (with equality at $R=2$), giving
$L_{J^*}(R;\mathfrak{se}(3)) \ge \frac{3}{8}|\phi''(1)|R^2 \approx 0.0505\,R^2$. \qedhere
\end{proof}

\begin{remark}
\label{rem:lower_scope}
The $R^2$ growth here comes from two things multiplying: the translation magnitude $r$
squared in $J^*$, and the rotational curvature $|\phi''(1)|$.
The objective $J^*$ is not in the bounded-$(M_1,M_2)$ class of
Definition~\ref{def:bounded_class} because $\|\nabla\widetilde{J}^*\| = \|t(g)\|$
grows with $R$.
So Theorem~\ref{thm:lower} and Proposition~\ref{prop:upper} apply to different
function classes, and the ratio $\approx 79$ between their constants
is not a measure of sharpness within a single class.
Gaps of this size between chain-rule upper bounds and adversarial lower bounds
are common in geometric Lipschitz analysis~\cite{higham2008functions,birtea2015hessian}.
\end{remark}

\begin{corollary}[Polynomial growth regime]
\label{cor:theta_r2}
\begin{enumerate}
\item[(i)] For $J^*$ (translation-distance objective): $L_{J^*}(R;\mathfrak{se}(3)) \ge \tfrac{3}{8}|\phi''(1)|\,R^2$.
\item[(ii)] For $J$ in the bounded-$(M_1,M_2)$ class: $L(R;\mathfrak{se}(3)) \le \tfrac{5(\sqrt{2}+1)}{3}(M_1+M_2)\,R^2$.
\end{enumerate}
$L(R;\mathfrak{se}(3)) = \Theta(R^2)$ (in the class-wise sense of (i) and (ii) above),
establishing $\mathfrak{se}(3)$ as the canonical example of the polynomial growth regime.
The lower bound (Theorem~\ref{thm:lower}) uses $J^*$ outside the bounded class;
the upper bound (Proposition~\ref{prop:upper}) applies within it.
See Remark~\ref{rem:lower_scope} for the function-class distinction.
\end{corollary}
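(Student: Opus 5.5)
The corollary is an assembly of two results already established, so I would prove each part by citing the appropriate earlier statement and then check that the constants and hypotheses match.

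For part (i), I would invoke Theorem~\ref{thm:lower} directly. It is stated for $R\ge 2$, so I would carry that restriction into (i). The argument is short: the on-ball point $\theta_R=(e_1,\sqrt{R^2-1}\,e_3)$ and the unit direction $H=(e_1,\mathbf{0})$ give a second directional derivative of size $\tfrac{R^2-1}{2}|\phi''(1)|$. A Lipschitz gradient bounds the Hessian operator norm, so this quantity bounds $L_{J^*}$ from below. Finally, $R^2-1\ge\tfrac34R^2$ for $R\ge2$ yields the constant $\tfrac38|\phi''(1)|$.

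For part (ii), I would invoke Proposition~\ref{prop:upper}, which holds for $R\ge1$. The one bookkeeping step is to dominate the combined bound $\tfrac{5(\sqrt2+1)}{3}RM_1+(1+R)^2M_2$ by $C(M_1+M_2)R^2$. The $M_1$ term is clear because $R\le R^2$ for $R\ge1$. The $M_2$ term needs $(1+R)^2\le CR^2$, which holds once $C\ge 4$, and $C\approx4.02$ satisfies this. I expect this inequality to be the main point to verify, since the proposition's own proof passes over it quickly.

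For the $\Theta(R^2)$ sentence, I would state explicitly that it is class-wise, as Remark~\ref{rem:lower_scope} already makes clear. The upper exponent $2$ is an upper bound for the bounded class. The lower exponent $2$ is attained by the specific objective $J^*$. Because these are different function classes, no minimax equality is claimed, and the word ``canonical'' is interpretive rather than something to be proved. On the common range $R\ge2$ both bounds hold with explicit constants, and that completes the assembly.
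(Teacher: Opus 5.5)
Your proposal is correct and follows the paper's own proof, which likewise cites Theorem~\ref{thm:lower} for (i) and Proposition~\ref{prop:upper} for (ii) and reads the $\Theta(R^2)$ claim class-wise. Your added details fill in what the paper leaves implicit: the check that $(1+R)^2\le 4R^2\le \tfrac{5(\sqrt{2}+1)}{3}R^2$ for $R\ge1$, and stating the ranges $R\ge2$ for (i) and $R\ge1$ for (ii) explicitly.
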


\begin{proof}
The lower bound is Theorem~\ref{thm:lower}.
The upper bound is Proposition~\ref{prop:upper}, whose $\mathcal{O}(R^2)$ rate
is self-contained from Lemmas~\ref{lem:se3_opnorm} and~\ref{lem:frechet},
with the explicit constant $C(\mathfrak{se}(3))=\frac{5(\sqrt{2}+1)}{3}$ derived in the proof above.
\end{proof}

\section{Step-Size Prescription and Numerical Illustration}
\label{sec:algorithms}

\subsection{Step-size corollary}

The descent lemma says: if $L$ is the Lipschitz constant then the step size
$\alpha = 1/(2L)$ guarantees sufficient decrease~\cite{absil2008optimization}. Substituting
Proposition~\ref{prop:upper} gives the following.

\begin{corollary}
\label{cor:stepsize}
For $J$ in the bounded-$(M_1,M_2)$ class with iterates in $\mathcal{B}_{R_0}$,
the safe step size satisfies
\[
\alpha^* \ge \frac{1}{2C(\mathfrak{se}(3))(M_1+M_2)R_0^2} = \Theta(1/R_0^2).
\]
\end{corollary}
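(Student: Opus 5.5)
The plan is to read the corollary off directly from Proposition~\ref{prop:upper} combined with the descent lemma. First I fix the setting: $J$ is in the bounded-$(M_1,M_2)$ class, and all iterates, together with the segments joining consecutive iterates, lie in $\mathcal{B}_{R_0}$. Since $\mathcal{B}_{R_0}$ is convex, the Lipschitz quotient in the definition of $L(R_0;\mathfrak{se}(3))$ controls $\|\nabla J(\theta)-\nabla J(\theta')\|_F$ along each such segment. This is exactly the hypothesis the descent lemma needs, namely $J(\theta-\alpha\nabla J(\theta)) \le J(\theta)-\alpha(1-\tfrac{L\alpha}{2})\|\nabla J(\theta)\|_F^2$ with $L = L(R_0;\mathfrak{se}(3))$. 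The "safe step size" $\alpha^*$ is the one prescribed by the paper, $\alpha^* = 1/(2L)$. Any $\alpha \le 1/(2L)$ gives decrease at least $\tfrac{3\alpha}{4}\|\nabla J\|_F^2$.

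Second, I substitute the upper bound. For $R_0 \ge 1$, Proposition~\ref{prop:upper} gives $L \le C(\mathfrak{se}(3))(M_1+M_2)R_0^2$. Since $t\mapsto 1/(2t)$ is decreasing, this yields $\alpha^* = 1/(2L) \ge 1/\bigl(2C(\mathfrak{se}(3))(M_1+M_2)R_0^2\bigr)$, which is the stated inequality. The $\Theta(1/R_0^2)$ label is then immediate, because the right-hand side is $c/R_0^2$ with a constant $c$ independent of $R_0$ (this uses that $M_1, M_2$ are $R$-independent by Definition~\ref{def:bounded_class}). I would state explicitly that "$\Theta$" here describes the guaranteed lower bound itself, which is exactly of order $1/R_0^2$. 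I would not claim that the true optimal step is of that order within the bounded class, in keeping with Remark~\ref{rem:lower_scope}.

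The main obstacle is bookkeeping, not mathematics. The first issue is ensuring that iterates and trial steps stay inside $\mathcal{B}_{R_0}$, so that the local constant $L(R_0)$ is actually applicable. I would handle this by making it part of the hypothesis "iterates in $\mathcal{B}_{R_0}$," or alternatively by a sublevel-set argument if that set is contained in the ball. The second issue is the side conditions $R_0\ge 1$ and $M_1+M_2>0$ needed to invoke Proposition~\ref{prop:upper} and to divide. If $R_0<1$, one can instead use $L(R_0)\le L(1)$, which gives a bound with $R_0^2$ replaced by $1$; I would note this as a minor caveat.
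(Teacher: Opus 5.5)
Your proposal is correct and follows the paper's argument: take the descent-lemma step $\alpha^*=1/(2L)$ and substitute the bound from Proposition~\ref{prop:upper}. Your extra bookkeeping makes explicit the hypotheses the paper leaves implicit, namely $R_0\ge 1$, $M_1+M_2>0$, and the iterates staying in $\mathcal{B}_{R_0}$.
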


This puts $\mathfrak{se}(3)$ squarely between compact algebras (where $\alpha^* = \mathcal{O}(1)$)
and hyperbolic algebras (where $\alpha^* = \mathcal{O}(e^{-2R})$).
The $R^2$ step-size scaling is also natural from the perspective of
Lie-group integrators~\cite{munthekaas1998runge,iserles2000lie}:
the local Lipschitz constant of the exponential map controls
step-size selection for RKMK-type schemes.
Table~\ref{tab:stepsize} summarises the comparison.

\begin{table}[htbp]
\caption{Step-size scaling across the three growth regimes.}
\label{tab:stepsize}
\centering
\begin{tabular}{@{}lll@{}}
\toprule
Algebra type & $L(R;\mathfrak{g})$ & Safe step size \\
\midrule
Compact (e.g.\ $\mathfrak{so}(n)$) & $\mathcal{O}(1)$ & $\mathcal{O}(1)$ \\
Intermediate (e.g.\ $\mathfrak{se}(3)$) & $\mathcal{O}(R^2)$ & $\mathcal{O}(1/R^2)$ \\
Hyperbolic (e.g.\ $\mathfrak{gl}(n)$) & $\mathcal{O}(e^{2R})$ & $\mathcal{O}(e^{-2R})$ \\
\bottomrule
\end{tabular}
\end{table}

\subsection{Numerical check}

As a brief sanity check, the empirical Lipschitz constant of $J^*$
over 2000 random pairs in $\mathcal{B}_R$ (seed 42, central differences)
gives values $3.9, 16, 36, 64, 100$ at $R = 1,2,3,4,5$ respectively,
consistent with $R^2$ scaling.
Since $J^*$ lies outside the bounded-$(M_1,M_2)$ class, this
illustrates the growth rate and is not a test of Proposition~\ref{prop:upper}.

\section{Conclusion}
\label{sec:conclusion}

The paper proves two things, restricted to
exponential-map parameterizations on $\mathfrak{se}(3)$
and the specific objective classes of Section~\ref{sec:prelim}.
First, $\|\exp(\theta)\|_{\mathrm{op}} \le 1+R$ for all $\theta \in \mathfrak{se}(3)$
with $\|\theta\|_F \le R$.
Second, $L_{J^*}(R;\mathfrak{se}(3)) \ge 0.0505\,R^2$ for $R \ge 2$.
Both proofs use only the Rodrigues formula and elementary calculus.
The upper bound $L \le 4.02(M_1+M_2)R^2$ is established in Proposition~\ref{prop:upper}.

Before stating the open questions, we note the geometric character of the growth.
The polynomial rate $L=\mathcal{O}(R^2)$ is not a curvature phenomenon:
$\mathrm{SE}(3)$ carries a flat Cartan--Schouten connection
(a standard fact in Lie group geometry; see e.g.~\cite{knapp2002}).
The $R^2$ growth is instead consistent with the torsion structure of the semidirect
product $\mathbb{R}^3 \rtimes \mathrm{SO}(3)$: one factor of $R$ from $\|v\|$
and one from the $\omega$-coupling in $J_L(\Omega)v$.
This geometric picture is consistent with the broader trichotomy of~\cite{kc2026trichotomy}.

Some questions remain.
The gap between the constants $4.02$ and $0.0505$ — a factor of roughly 79 —
is large, and we do not know whether it is fundamental or an artifact of comparing
two different function classes.
Getting a lower bound within the same function class would sharpen the picture
considerably, but our adversarial construction does not achieve this.
The mechanism behind the $R^2$ rate extends immediately to
$\mathfrak{se}(n) = \mathbb{R}^n \rtimes \mathfrak{so}(n)$ for any $n \ge 2$:
the exponential map retains the block form~\eqref{eq:se3_exp}
(see~\cite{gallier2020lie} for the general $\mathrm{SE}(n)$ case)
with $\exp(\Omega) \in \mathrm{SO}(n)$ orthogonal and
$J_L(\Omega) = \int_0^1 \exp(s\Omega)\,ds$ a contraction,
so Lemmas~\ref{lem:se3_opnorm} and~\ref{lem:frechet} carry over verbatim
and the gradient Lipschitz constant satisfies $L(R;\mathfrak{se}(n)) = \mathcal{O}(R^2)$
for all~$n$.
Whether the $R^2$ rate is tight for all semidirect products $K \ltimes V$
with $K$ compact remains an open question.

More broadly, whether $R^2$ is the right exponent for all non-compact
Lie algebras without hyperbolic elements remains open.
The Heisenberg algebra $\mathfrak{h}_n$ is nilpotent with no hyperbolic elements,
but its bracket structure differs from $\mathfrak{se}(3)$; we conjecture:
\nopagebreak[4]
\begin{conjecture}
\label{conj:heisenberg}
For the Heisenberg algebra $\mathfrak{h}_n$, $L(R;\mathfrak{h}_n) = \Theta(R^2)$.
\end{conjecture}

\begin{ack}
Parts of this manuscript were edited with the assistance of AI tools;
all mathematical content and proofs are original work of the authors.
\end{ack}

\end{document}